\newtheorem{theorem}{Theorem}
\newtheorem{prop}[theorem]{Proposition}
\newtheorem{remark}{Remark}
\newenvironment{proof-sketch}{\noindent{\bf Sketch of Proof}\hspace*{1em}}{\qed\bigskip}
\newcommand{\RR}{\mathbb R}
\newcommand{\NN}{\mathbb N}
\renewcommand{\leq}{\leqslant}
\renewcommand{\geq}{\geqslant}
\begin{document}

\title[Double-phase problems]{Double-phase problems and a discontinuity property of the spectrum}
\author[N.S. Papageorgiou]{Nikolaos S. Papageorgiou}
\address[N.S. Papageorgiou]{National Technical University, Department of Mathematics,
				Zografou Campus, Athens 15780, Greece \& Institute of Mathematics, Physics and Mechanics, 1000 Ljubljana, Slovenia}
\email{\tt npapg@math.ntua.gr}
\author[V.D. R\u{a}dulescu]{Vicen\c{t}iu D. R\u{a}dulescu}
\address[V.D. R\u{a}dulescu]{Institute of Mathematics, Physics and Mechanics, 1000 Ljubljana, Slovenia \& Faculty of Applied Mathematics, AGH University of Science and Technology, 30-059 Krak\'ow, Poland \& Institute of Mathematics ``Simion Stoilow" of the Romanian Academy, P.O. Box 1-764, 014700 Bucharest, Romania}
\email{\tt vicentiu.radulescu@imfm.si}
\author[D.D. Repov\v{s}]{Du\v{s}an D. Repov\v{s}}
\address[D.D. Repov\v{s}]{Faculty of Education and Faculty of Mathematics and Physics, University of Ljubljana \& Institute of Mathematics, Physics and Mechanics, 1000 Ljubljana, Slovenia}
\email{\tt dusan.repovs@guest.arnes.si}
\keywords{Nehari manifold, continuous spectrum, nonlinear regularity\\
\phantom{aa} 2010 Mathematics Subject Classification. Primary:  35D30. Secondary: 35J60, 35P30}
\begin{abstract}
We consider a nonlinear eigenvalue problem driven by the sum of $p$ and $q$-Laplacian. We show that the problem has a continuous spectrum. Our result reveals a discontinuity property for the spectrum of a parametric ($p,q$)-differential operator as the parameter $\beta\rightarrow 1^-$.
\end{abstract}
\maketitle

\section{Introduction}

Let $\Omega\subseteq\RR^N$ be a bounded domain with a $C^2$-boundary $\partial\Omega$. In this paper we study the following nonlinear nonhomogeneous eigenvalue problem:
\begin{equation}
		\left\{\begin{array}{ll}
			-\alpha\Delta_pu(z)-\beta\Delta_qu(z)=\lambda|u(z)|^{q-2}u(z)\ \mbox{in}\ \Omega,&\\
			u|_{\partial\Omega}=0,\alpha>0,\beta>0,\lambda>0,1<p,q<\infty,p\neq q.&
		\end{array}\right\}\tag{$P_\lambda$} \label{eqP}
\end{equation}

For every $r\in(1,+\infty)$ by $\Delta_r$ we denote the $r$-Laplace differential operator defined by
$$\Delta_ru={\rm div}\,(|Du|^{r-2}Du)\ \mbox{for all}\ u\in W^{1,r}_{0}(\Omega).$$

Equations driven by the sum of a $p$-Laplacian and of $q$-Laplacian, known as $(p,q)$-equations, arise in many problems of mathematical physics such as particle physics, see Benci, D'Avenia, Fortunato \& Pisani \cite{1} and nonlinear elasticity, see Zhikov \cite{20}. Zhikov \cite{20} introduced models for strongly anisotropic materials in the context of homogenization. For this purpose defined and studied the double phase functional
$$J_{p,q}(u)=\int_{\Omega}\left(|Du|^p+a(z)|Du|^q\right)dz\ 0\leq a(z)\leq M,1<p<q, u\in W^{1,q}_{0}(\Omega),$$
where the modulating coefficient $a(z)$ dictates the geometry of the composite made of two different materials with hardening exponents $p$ and $q$ respectively.

Such problems were studied by Chorfi \& R\u adulescu \cite{2}, Gasinski \& Papageorgiou \cite{4,5},  Marano, Mosconi \& Papageorgiou \cite{10,11},  Mihailescu \& R\u adulescu \cite{12}, Papageorgiou \& R\u adulescu \cite{14, 15},  Papageorgiou, R\u adulescu \& Repov\v{s} \cite{16,16bis,17}, R\u adulescu \& Repov\v{s} \cite{18}, and Yin \& Yang \cite{19}, under different conditions on the data of the problem.

In the present paper, we show that problem \eqref{eqP} has a continuous spectrum which is the half line $(\beta\hat{\lambda}_1(q),+\infty)$, with $\hat{\lambda}_1(q)>0$ being the principal eigenvalue of $(-\Delta_q,W^{1,q}_{0}(\Omega))$. So, for every $\lambda\in(\beta\hat{\lambda}_1(q),+\infty)$, problem \eqref{eqP} admits a nontrivial solution. Our result reveals an interesting fact better illustration in the particular case where
$$1<p<\infty,q=2,p\neq q,\alpha=1-\beta,\beta\in(0,1).$$

Let $L_{\beta}=-(1-\beta)\Delta_pu-\beta\Delta u$ and let $\hat{\sigma}(\beta)$ denote the spectrum of $L_{\beta}$. We have that
$$\hat{\sigma}(\beta)=(\beta\hat{\lambda}_1(q),+\infty)\ \mbox{for}\ \beta\in(0,1).$$

The set function $\beta\rightarrow\hat{\sigma}(\beta)$ is $h$-continuous (Hausdorff continuous) on $\left(0,1\right]$, but at $\beta=0$ exhibits a discontinuity since $L_0=\Delta$ which has a discrete spectrum.

Our approach is based on the use of the Nehari manifold. So, we perform minimization under constraint.

\section{Preliminaries}

Let $r\in(1,+\infty)$. We recall some basic facts about the spectrum of $(-\Delta_r,W^{1,r}_{0}(\Omega))$. So, we consider nonlinear eigenvalue problem
\begin{equation}\label{eq1}
	-\Delta_ru(z)=\hat{\lambda}|u(z)|^{r-2}u(z)\ \mbox{in}\ \Omega,u|_{\partial\Omega}=0
\end{equation}

We say that $\hat{\lambda}$ is an eigenvalue of $(-\Delta_r,W^{1,r}_{0}(\Omega))$ if problem (\ref{eq1}) admits a nontrivial solution $\hat{u}\in W^{1,p}_{0}(\Omega)$, known as an eigenfunction corresponding to the eigenvalue $\hat{\lambda}$. From the nonlinear regularity theory (see, for example, Gasinski \& Papageorgiou \cite[pp. 737-738]{6}), we have that $\hat{u}\in C^1_0(\overline{\Omega})=\left\{u\in C^1(\overline{\Omega}):u|_{\partial\Omega}=0\right\}$. There is a smallest eigenvalue $\hat{\lambda}_1(r)$ which has the following properties:
\begin{itemize}
	\item $\hat{\lambda}_1(r)$ is isolated (that is, there exists $\epsilon>0$ such that the interval $(\hat{\lambda}_1(r),\hat{\lambda}_1(r)+\epsilon)$ contains no eigenvalue of $(-\Delta_r,W^{1,r}_{0}(\Omega))$).
	\item $\hat{\lambda}_1(r)$ is simple (that is, if $\hat{u},\hat{v}$ are eigenfunction corresponding to $\hat{\lambda}_1(r)$, then $\hat{u}=\mu\hat{v}$ with $\mu\in\RR\backslash\{0\}$).
	\item $\hat{\lambda}_1(r)>0$ and admits the following variational characterization
	\begin{equation}\label{eq2}
		\hat{\lambda}_1(r)=inf\left\{\frac{||Du||^r_r}{||u||^r_r}:u\in W^{1,r}_{0}(\Omega),u\neq 0\right\}
	\end{equation}
\end{itemize}

The infimum in (\ref{eq2}) is realized on the corresponding one dimensional eigenspace. The above properties imply that the elements of this eigenspace are in $C^1_0(\overline{\Omega})$ and do not change sign. By $\hat{u}_1(r)$ we denote the positive, $L^r$-normalized (that is, $||\hat{u}_1(r)||_r=1$) eigenfunction corresponding to $\hat{\lambda}_1(r)>0$. We have
$$\hat{u}_1(r)\in C_+=\{u\in C^1_0(\overline{\Omega}):u(z)\geq 0\ \mbox{for all}\ z\in\overline{\Omega}\}.$$

In fact the nonlinear maximum principle (see, for example, Gasinski \& Papageorgiou \cite[p. 738]{5}), implies that
$$\hat{u}_1(r)\in {\rm int}\, C_+=\{u\in C_+:u(z)>0\ \mbox{for all}\ z\in\Omega,\left.\frac{\partial u}{\partial n}\right|_{\partial\Omega}<0\},$$
with $\frac{\partial u}{\partial n}=(Du,n)_{\RR^N}$ being the outward normal derivative of $u$. Note that if $\hat{u}$ an eigenfunction corresponding to an eigenvalue $\hat{\lambda}\neq\hat{\lambda}_1(r)$, then $\hat{u}$ is nodal (that is, sign changing). The Ljusternik-Schnirelmann minimax scheme gives in addition to $\hat{\lambda}_1(r)$ a whole strictly increasing sequence $\{\hat{\lambda}_k(r)\}_{k\in\NN}$ of distinct eigenvalue such that $\hat{\lambda}_k(r)\rightarrow+\infty$. These are called ``variational eigenvalues'' and we do not know if they exhaust the spectrum of $(-\Delta_r,W^{1,r}_{0}(\Omega))$. However, if $r=2$ (linear eigenvalue problem), then the spectrum is the sequence $\{\hat{\lambda}_k(2)\}_{k\in\NN}$ of variational eigenvalues.

Let $r=max\{p,q\}$ and $\lambda>0$. The energy (Euler) functional for problem \eqref{eqP} is defined by
$$\varphi_\lambda(u)=\frac{\alpha}{p}||Du||^p_p+\frac{\beta}{q}||Du||^q_q-\frac{\lambda}{q}||u||^q_q\ \mbox{for all}\ u\in W^{1,r}_{0}(\Omega).$$

Evidently $\varphi_\lambda\in C^1(W^{1,r}_{0}(\Omega),\RR)$.

The Nehari manifold for the functional $\varphi_\lambda$ is the set
$$N_\lambda=\{u\in W^{1,r}_{0}(\Omega):\left\langle \varphi'_\lambda(u),u\right\rangle=0,u\neq 0\}.$$

In what follows, we denote by $\hat{\sigma}(\alpha,\beta)$  the spectrum of
$$u\rightarrow-\alpha\Delta_pu-\beta\Delta_qu\ \mbox{for all}\ u\in W^{1,r}_{0}(\Omega).$$

So, $\lambda\in\hat{\sigma}(\alpha,\beta)$ if and only if problem \eqref{eqP} admits a nontrivial solution $\hat{u}\in C^1_0(\overline{\Omega})$. This solution is an eigenvector for the eigenvalue $\lambda$.

In what follows for every $\tau\in(1,+\infty)$ by $||\cdot||_{1,\tau}$ we denote the norm of $W^{1,\tau}_{0}(\Omega)$. On account of the Poincare inequality, we have
$$||u||_{1,\tau}=||Du||_\tau\ \mbox{for all}\ u\in W^{1,\tau}_{0}(\Omega).$$

Also, by $A_\tau:W^{1,p}_{0}(\Omega)\rightarrow W^{-1,\tau'}_{0}(\Omega)=W^{1,p}_{0}(\Omega)^*\left(\frac{1}{\tau}+\frac{1}{\tau'}=1\right)$ we denote the nonlinear operator defined by
$$\left\langle A_\tau(u),h\right\rangle=\int_{\Omega}|Du|^{\tau-2}(Du,Dh)_{\RR^N}dt\ \mbox{for all}\ u,h\in W^{1,\tau}_{0}(\Omega).$$

This operator is bounded (that is, maps bounded sets to bounded sets), continuous, strictly monotone (hence maximal monotone too).

\section{The spectrum of \eqref{eqP}}

First we deal with the easy case where $1<q<p$. As we will see in the sequel, for this case $\varphi_\lambda(\cdot)$ is coercive and so we can use the direct method of the calculus of variations.
\begin{prop}\label{prop1}
	If $1<q<p$, then $\hat{\sigma}(\alpha,\beta)=(\beta\hat{\lambda}_1(q),+\infty)$ and the eigenvectors belong in $C^1_0(\overline{\Omega})$.
\end{prop}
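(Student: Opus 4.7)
The plan is to use the direct method of the calculus of variations. Since $q<p$, we have $r=p$, so $\varphi_\lambda$ is defined on $W^{1,p}_0(\Omega)$. First I would check that $\varphi_\lambda$ is coercive: by the Sobolev embedding $W^{1,p}_0(\Omega)\hookrightarrow L^q(\Omega)$ there is $c>0$ with $\|u\|_q^q\leq c\|u\|_{1,p}^q$, so
$$\varphi_\lambda(u)\geq \frac{\alpha}{p}\|u\|_{1,p}^p-\frac{\lambda c}{q}\|u\|_{1,p}^q,$$
which tends to $+\infty$ as $\|u\|_{1,p}\to\infty$ precisely because $q<p$. Weak lower semicontinuity follows from convexity of $u\mapsto\|Du\|_r^r$ together with the compactness of the embedding $W^{1,p}_0(\Omega)\hookrightarrow L^q(\Omega)$, which makes $u\mapsto\|u\|_q^q$ sequentially weakly continuous. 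Hence a global minimizer $u_0\in W^{1,p}_0(\Omega)$ exists.

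Next I would prove that $\lambda>\beta\hat\lambda_1(q)$ forces $u_0\neq 0$. Using $\hat u_1(q)\in\mathrm{int}\,C_+\subset W^{1,p}_0(\Omega)$, normalized so that $\|\hat u_1(q)\|_q=1$ and thus $\|D\hat u_1(q)\|_q^q=\hat\lambda_1(q)$, for every $t>0$,
$$\varphi_\lambda(t\hat u_1(q))=\frac{\alpha t^p}{p}\|D\hat u_1(q)\|_p^p+\frac{t^q}{q}\bigl(\beta\hat\lambda_1(q)-\lambda\bigr).$$
Since $q<p$ and the coefficient of $t^q$ is negative, the $t^q$ term dominates as $t\to 0^+$, giving $\varphi_\lambda(t\hat u_1(q))<0$ for small $t>0$. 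Therefore $\inf\varphi_\lambda<0=\varphi_\lambda(0)$, so $u_0\neq 0$. Being a critical point of $\varphi_\lambda$, $u_0$ is a weak solution of \eqref{eqP}, which places $\lambda$ in $\hat\sigma(\alpha,\beta)$.

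For the reverse inclusion, I would exploit the Nehari identity. If $\hat u\neq 0$ solves \eqref{eqP}, testing with $\hat u$ gives
$$\alpha\|D\hat u\|_p^p+\beta\|D\hat u\|_q^q=\lambda\|\hat u\|_q^q.$$
Combining with the variational characterization $\|D\hat u\|_q^q\geq\hat\lambda_1(q)\|\hat u\|_q^q$ yields
$$\alpha\|D\hat u\|_p^p\leq\bigl(\lambda-\beta\hat\lambda_1(q)\bigr)\|\hat u\|_q^q.$$
Since $\hat u\neq 0$ implies $\|D\hat u\|_p>0$, this forces $\lambda>\beta\hat\lambda_1(q)$ and rules out every $\lambda\in(0,\beta\hat\lambda_1(q)]$. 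Finally, the $C^1_0(\overline\Omega)$-regularity of any eigenvector follows from the nonlinear regularity theory cited after \eqref{eq1}.

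The whole argument is fairly routine; the one place to be careful is the nontriviality step, where the strict exponent inequality $q<p$ is essential to make the negative $t^q$-term dominate the positive $t^p$-term near zero. This is also the reason the easy case $1<q<p$ is disjoint in spirit from the case $1<p<q$, which cannot be handled by the same coercive minimization and will presumably need the Nehari manifold constraint advertised in the introduction.
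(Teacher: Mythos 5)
Your proposal is correct and follows essentially the same route as the paper: coercivity of $\varphi_\lambda$ (the paper bounds $\|u\|_q^q$ via the Rayleigh quotient \eqref{eq2} rather than the Sobolev embedding, an immaterial difference), weak lower semicontinuity, the Weierstrass--Tonelli theorem, nontriviality of the minimizer by evaluating $\varphi_\lambda$ at $t\hat u_1(q)$ for small $t>0$, and Lieberman's regularity theory. Your explicit Nehari-identity argument for excluding $\lambda\le\beta\hat\lambda_1(q)$ is exactly what the paper's ``otherwise we violate \eqref{eq2}'' is shorthand for.
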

\begin{proof}
	Now $r=max\{p,q\}=p$. Evidently, if $\lambda\leq\beta\hat{\lambda}_1(q)$, then $\lambda\notin\hat{\sigma}(\alpha,\beta)$ ot other wise we violate (\ref{eq2}).
	
	Let $\lambda>\beta\hat{\lambda}_1(q)$ and $u\in W^{1,p}_{0}(\Omega)$. We have
	\begin{eqnarray*}
		&\varphi_\lambda(u)&\geq\frac{\alpha}{p}||Du||^p_p-\frac{\lambda}{\hat{\lambda}_1(q)q}||Du||^q_q\ (\mbox{see (\ref{eq2})})\\
		&&\geq\frac{\alpha}{p}||Du||^p_p-c_1||Du||^q_q\ \mbox{for some}\ c_1>0\ (\mbox{since}\ q<p),\\
		\Rightarrow&\varphi_\lambda(u)&\geq c_2||u||^p-c_3||u||^q\ \mbox{for some}\ c_2,c_3>0,\\
		\Rightarrow&\varphi_\lambda(\cdot)&\ \mbox{is coercive}\ (\mbox{since}\ q<p).
	\end{eqnarray*}
	
	Also, by the Sobolev embedding theorem $\varphi_\lambda(\cdot)$ is sequentially weakly lower semicontinuous. So, by Weierstrass-Tonelli theorem, we can find $\hat{u}_\lambda\in W^{1,p}_{0}(\Omega)$ such that
	\begin{equation}\label{eq3}
		\varphi_\lambda(\hat{u}_\lambda)=inf\{\hat{\varphi}_\lambda(u):u\in W^{1,p}_{0}(\Omega)\}.
	\end{equation}
	
	For $t>0$ we have
	\begin{eqnarray*}
		\varphi_\lambda(t\hat{u}_1(q))&=&\frac{t^p\alpha}{p}||D\hat{u}_1(q)||^p_p+\frac{t^q}{q}[\beta\hat{\lambda}_1(q)-\lambda]\ (\mbox{recall that}\ ||\hat{u}_1(q)||_q=1)\\
		&=&c_4t^p-c_5t^q\ \mbox{for some}\ c_4,c_5>0\ (\mbox{recall that}\ \lambda>\beta\hat{\lambda}_1(q))
	\end{eqnarray*}
	
	Since $q<p$, choosing $t\in(0,1)$ small we have
	\begin{eqnarray*}
		&&\varphi_\lambda(t\hat{u}_1(q))<0,\\
		&\Rightarrow&\varphi_\lambda(\hat{u}_\lambda)<0=\varphi_\lambda(0)\ (\mbox{see (\ref{eq3})}),\\
		&\Rightarrow&\hat{u}_\lambda\neq 0.
	\end{eqnarray*}
	
	From (\ref{eq3}) we have
	\begin{eqnarray*}
		&&\varphi'_\lambda(\hat{u}_\lambda)=0,\\
		&\Rightarrow&\left\langle \alpha A_p(\hat{u}_\lambda),h\right\rangle+\left\langle \beta A_q(\hat{u}_\lambda),h\right\rangle=\lambda\int_{\Omega}|\hat{u}_\lambda|^{q-2}\hat{u}_\lambda hdz\ \mbox{for all}\ h\in W^{1,p}_{0}(\Omega),\\
		&\Rightarrow&-\alpha\Delta\hat{u}_\lambda(z)-\beta\Delta_q\hat{u}_\lambda(z)=\lambda|\hat{u}_\lambda(z)|^{q-2}\hat{u}_\lambda(z)\ \mbox{for almost all}\ z\in\Omega,\hat{u}_\lambda|_{\partial\Omega}=0,\\
		&\Rightarrow&\hat{u}_\lambda\in C^1_0(\overline{\Omega})\ (\mbox{by the nonlinear regularity theory, see Lieberman \cite{9}}).
	\end{eqnarray*}
\end{proof}

When $1<p<q$, the energy functional is no longer coercive. So, the direct method of the calculus of the variations fails and we have to use a different approach. Instead we will minimize $\varphi_\lambda$ on the Nehair manifold $N_\lambda$.

First we show that $N_\lambda\neq\varnothing$.
\begin{prop}\label{prop2}
	$\lambda>\beta\hat{\lambda}_1(q)$ if and only if $N_\lambda\neq\varnothing$.
\end{prop}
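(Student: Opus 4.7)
The plan is to establish the two implications separately, working now in $W^{1,q}_{0}(\Omega)$ (since the case under consideration is $1<p<q$, so $r=\max\{p,q\}=q$) and exploiting the explicit formula
$$\langle\varphi'_\lambda(u),u\rangle=\alpha\|Du\|^p_p+\beta\|Du\|^q_q-\lambda\|u\|^q_q.$$

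For the necessity direction, I would pick any $u\in N_\lambda$ and read off from the defining condition that
$$\beta\|Du\|^q_q<\alpha\|Du\|^p_p+\beta\|Du\|^q_q=\lambda\|u\|^q_q,$$
where the strict inequality uses $\alpha>0$ together with $u\neq 0$ (which forces $\|Du\|^p_p>0$). Dividing by $\|u\|^q_q$ and applying the variational characterization (\ref{eq2}) of $\hat{\lambda}_1(q)$ then immediately yields $\lambda>\beta\hat{\lambda}_1(q)$.

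For the sufficiency direction I would produce a concrete element of $N_\lambda$ by fibering along the positive, $L^q$-normalized principal eigenfunction $\hat{u}_1(q)\in {\rm int}\,C_+$. Consider the fiber map $\psi(t)=\varphi_\lambda(t\hat{u}_1(q))$ for $t>0$. Using $\|\hat{u}_1(q)\|_q=1$ and $\|D\hat{u}_1(q)\|^q_q=\hat{\lambda}_1(q)$, one computes
$$\psi(t)=\frac{\alpha t^p}{p}\|D\hat{u}_1(q)\|^p_p-\frac{t^q}{q}\bigl[\lambda-\beta\hat{\lambda}_1(q)\bigr]=c_1t^p-c_2t^q$$
with $c_1,c_2>0$ (the second constant being positive precisely because $\lambda>\beta\hat{\lambda}_1(q)$). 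Since $p<q$, the equation $\psi'(t)=0$ has a unique positive solution
$$t_0=\left(\frac{\alpha\|D\hat{u}_1(q)\|^p_p}{\lambda-\beta\hat{\lambda}_1(q)}\right)^{1/(q-p)},$$
and because $t\psi'(t)=\langle\varphi'_\lambda(t\hat{u}_1(q)),t\hat{u}_1(q)\rangle$, it follows at once that $t_0\hat{u}_1(q)\in N_\lambda$.

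I do not anticipate any real obstacle: the whole argument is a textbook Nehari-manifold computation. The only points where one must be careful are (i) to extract the \emph{strict} inequality $\lambda>\beta\hat{\lambda}_1(q)$ in the necessity part from the combination $\alpha>0$ and $u\neq 0$, and (ii) to observe that the sign of $q-p$ is what makes the fiber map $\psi$ strictly concave-like on $(0,\infty)$ and forces a unique positive critical point; without $p<q$ the projection onto $N_\lambda$ along rays would not work in this straightforward way.
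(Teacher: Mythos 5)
Your proof is correct and follows essentially the same route as the paper: the necessity part extracts $\lambda>\beta\hat{\lambda}_1(q)$ from the Nehari identity together with the variational characterization \eqref{eq2}, and the sufficiency part scales a fixed direction into $N_\lambda$ via the fiber map. The only cosmetic difference is that the paper takes an arbitrary $u\neq 0$ with $\beta\|Du\|_q^q<\lambda\|u\|_q^q$ and finds the scaling $t_0$ by Bolzano's intermediate value theorem, whereas you specialize to $\hat{u}_1(q)$ and solve for $t_0$ explicitly; both are valid.
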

\begin{proof}
	As before (see the proof of Proposition \ref{prop1}), using (\ref{eq2}) we see that
	$$N_\lambda\neq\varnothing\Rightarrow\lambda>\beta\hat{\lambda}_1(q).$$
	
	Now suppose that $\lambda>\beta\hat{\lambda}_1(q)$. Then on account of (\ref{eq2}) we can find $u\in W^{1,q}_{0}(\Omega)\ u\neq 0$ such that
	\begin{equation}\label{eq4}
		||Du||^q_q<\frac{\lambda}{\beta}||u||^q_q
	\end{equation}
	
	Consider the function $\xi_\lambda:(0,+\infty)\rightarrow\RR$ defined by
	\begin{eqnarray}\label{eq5}
		\xi_\lambda(t)&=&\left\langle \varphi'_\lambda(tu),tu\right\rangle\nonumber\\
		&=&\alpha t^p||Du||^p_p+\beta t^q||Du||^q_q-\lambda t^q||u||^q_q\nonumber\\
		&=&t^p\alpha||Du||^p_p+t^q[\beta||Du||^q_q-\alpha||u||^q_q]\nonumber\\
		&=&c_6t^p-c_7t^q\ \mbox{for some}\ c_6,c_7>0\ (\mbox{see (\ref{eq4})}).
	\end{eqnarray}
	
	Since $q>p$, from (\ref{eq5}) we see that
	$$\xi_\lambda(t)\rightarrow-\infty\ \mbox{as}\ t\rightarrow+\infty$$
	
	On the other hand for $t\in(0,1)$ small, we have
	$$\xi_\lambda(t)>0\ \mbox{see (\ref{eq5})}.$$
	
	Therefore, by Balzano's theorem, we can find $t_0>0$ such that
	\begin{eqnarray*}
		&&\xi_\lambda(t_0)=0,\\
		&\Rightarrow&\left\langle \varphi'_\lambda(t_0u),t_0u\right\rangle=0\ \mbox{with}\ t_0u\neq 0,\\
		&\Rightarrow&t_0u\in N_\lambda\ \mbox{and so}\ N_\lambda\neq\varnothing.
	\end{eqnarray*}
\end{proof}

We define
\begin{equation}\label{eq6}
	m_\lambda=inf\{\varphi_\lambda(u):u\in N_\lambda\}.
\end{equation}

For $u\in N_\lambda$, we have
\begin{equation}\label{eq7}
	\alpha||Du||^p_p+\beta||Du||^q_q=\lambda||u||^q_q.
\end{equation}

Therefore
\begin{eqnarray}\label{eq8}
	&\varphi_\lambda(u)&=\frac{\alpha}{p}||Du||^p_p+\frac{\beta}{q}||Du||^q_q-\frac{1}{q}[\alpha||Du||^p_p+\beta||Du||^q_q]\ \mbox{(see (\ref{eq7}))}\nonumber\\
	&&=\alpha\left[\frac{1}{p}-\frac{1}{q}\right]||Du||^p_p,\\
	\Rightarrow&m_\lambda\geq 0&(\mbox{see (\ref{eq6})})\nonumber
\end{eqnarray}

From (\ref{eq8}) we infer that $\varphi_\lambda|_{N_\lambda}$ is coercive on $W^{1,p}_{0}(\Omega)$.
\begin{prop}\label{prop3}
	If $\lambda>\beta\hat{\lambda}_1(q)$, then every minimizing sequence of (\ref{eq6}) is bounded in $W^{1,q}_{0}(\Omega)$.
\end{prop}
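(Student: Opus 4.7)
My plan is to combine the Nehari identity (\ref{eq8}) with a rescaling/contradiction argument. Formula (\ref{eq8}) immediately yields the easier half: for every $u\in N_\lambda$,
$$\varphi_\lambda(u)=\alpha\left(\frac{1}{p}-\frac{1}{q}\right)||Du||_p^p,$$
and since $p<q$ the coefficient is strictly positive, so if $\{u_n\}\subseteq N_\lambda$ is a minimizing sequence then $\varphi_\lambda(u_n)\to m_\lambda$ already forces boundedness of $||Du_n||_p$; that is, $\{u_n\}$ is bounded in $W^{1,p}_0(\Omega)$. The genuine difficulty is that $N_\lambda$ actually lives in the smaller-norm space $W^{1,q}_0(\Omega)$, while the quantity $||Du_n||_q$ is not directly controlled by $\varphi_\lambda(u_n)$.

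To bridge this gap I would argue by contradiction. Suppose, along a subsequence, that $||Du_n||_q\to+\infty$, and set $v_n=u_n/||Du_n||_q$, so that $||Dv_n||_q=1$. By reflexivity of $W^{1,q}_0(\Omega)$ together with the Rellich--Kondrachov compact embedding, I may pass to a further subsequence with $v_n\rightharpoonup v$ in $W^{1,q}_0(\Omega)$ and $v_n\to v$ in $L^q(\Omega)$. Dividing the Nehari identity (\ref{eq7}) by $||Du_n||_q^q$ gives
$$\alpha\,\frac{||Du_n||_p^p}{||Du_n||_q^q}+\beta=\lambda\,||v_n||_q^q.$$
The first term on the left vanishes in the limit, because its numerator is bounded by the $W^{1,p}_0$-boundedness established above while its denominator blows up. Passing to the limit therefore yields $||v||_q^q=\beta/\lambda>0$; in particular $v\neq 0$.

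On the other hand, $||Dv_n||_p=||Du_n||_p/||Du_n||_q\to 0$, so $v_n\to 0$ strongly in $W^{1,p}_0(\Omega)$. Since $p<q$ and $\Omega$ is bounded, the continuous embedding $W^{1,q}_0(\Omega)\hookrightarrow W^{1,p}_0(\Omega)$ transfers the weak convergence $v_n\rightharpoonup v$ in $W^{1,q}_0(\Omega)$ into weak convergence $v_n\rightharpoonup v$ in $W^{1,p}_0(\Omega)$. Uniqueness of weak limits then forces $v=0$, contradicting $v\neq 0$.

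The principal obstacle is precisely the mismatch between the $W^{1,p}_0$-control produced by (\ref{eq8}) and the $W^{1,q}_0$-boundedness actually required in the statement. The rescaling is engineered exactly to convert a hypothetical blow-up in $W^{1,q}_0$ into a nontrivial $L^q$-limit (via the Nehari relation) while the $W^{1,p}_0$-bound forces the same limit to vanish; these two incompatible constraints close the argument.
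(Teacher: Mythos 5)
Your proof is correct and follows essentially the same route as the paper: a contradiction argument that rescales the minimizing sequence and plays the Nehari identity (\ref{eq7}) against the $W^{1,p}_0$-bound coming from (\ref{eq8}). The only (cosmetic) difference is that you normalize by $\|Du_n\|_q$ and obtain the contradiction from $\|v\|_q^q=\beta/\lambda>0$ versus $v=0$, whereas the paper normalizes by $\|u_n\|_q$ and contradicts $\|y_n\|_q=1$; both hinge on the same two ingredients.
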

\begin{proof}
	We argue by contradiction. So, suppose that $\{u_n\}_{n\geq 1}\subseteq W^{1,q}_{0}(\Omega)$ is a minimizing sequence of (\ref{eq6}) such that
	$$||u_n||_{1,q}\rightarrow+\infty.$$
	
	We have
	\begin{eqnarray}
		&&\alpha||Du_n||^p_p+\beta||Du_n||^q_q=\lambda||u_n||^q_q\ \mbox{for all}\ n\in\NN,\label{eq9}\\
		&\Rightarrow&\beta||Du_n||^q_q=\beta||u_n||^q_{1,q}\leq\lambda||u_n||^q_q\ \mbox{for all}\ n\in\NN,\label{eq10}\\
		&\Rightarrow&||u_n||_q\rightarrow+\infty\ \mbox{as}\ n\rightarrow \infty.\nonumber
	\end{eqnarray}
	
	We set $y_n=\frac{u_n}{||u_n||_q}\ n\in\NN$. Then $||y_n||_q=1\ n\in\NN$. Also, from (\ref{eq10}) we have
	\begin{eqnarray*}
		&&||Dy_n||^q_q\leq\frac{\lambda}{\beta}||y_n||^q_q=\frac{\lambda}{\beta}\ \mbox{for all}\ n\in\NN,\\
		&\Rightarrow&\{y_n\}_{n\geq 1}\subseteq W^{1,q}_{0}(\Omega)\ \mbox{is bounded}.
	\end{eqnarray*}
	
	So, we may assume that
	\begin{equation}\label{eq11}
		y_n\stackrel{w}{\rightarrow}y\ \mbox{in}\ W^{1,q}_{0}(\Omega)\ \mbox{and}\ y_n\rightarrow y\ \mbox{in}\ L^q(\Omega).
	\end{equation}
	
	We multiply (\ref{eq9}) with $\frac{1}{||u_n||^q_q}$. We obtain
	\begin{equation}\label{eq12}
		\alpha||Dy_n||^p_p=\frac{\lambda||u_n||^q_q-\beta||Du_n||^q_q}{||u_n||^p_q},\ n\in\NN.
	\end{equation}
	
	Recall that $\{u_n\}_{n\geq 1}\subseteq N_\lambda$ is a minimizing sequence for (\ref{eq6}). So, we have
	\begin{equation}\label{eq13}
		\left(\frac{1}{p}-\frac{1}{q}\right)[\lambda||u_n||^q_q-\beta||Du_n||^q_q]\rightarrow m_\lambda\ \mbox{as}\ n\rightarrow\infty\ (\mbox{see (\ref{eq7}), (\ref{eq8})})
	\end{equation}
	
	Using (\ref{eq13}) in (\ref{eq12}), we infer that
	\begin{eqnarray*}
		&&y_n\rightarrow 0\ \mbox{in}\ W^{1,p}_{0}(\Omega),\\
		&\Rightarrow&y_n\rightarrow 0\ \mbox{in}\ L^q(\Omega)\ (\mbox{see (\ref{eq11})}),
	\end{eqnarray*}
	a contradiction since $||y_n||_q=1$ for all $n\in\NN$.
	
	Therefore we conclude that every minimizing sequence of (\ref{eq6}) is bounded in $W^{1,q}_{0}(\Omega)$.
\end{proof}

We have already seen that $m_\lambda\geq 0$. We can say more.
\begin{prop}\label{prop4}
	If $\lambda>\beta\hat{\lambda}_1(q)$, then $m_\lambda>0.$
\end{prop}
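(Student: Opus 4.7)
The plan is to argue by contradiction. Suppose $m_\lambda=0$ and let $\{u_n\}_{n\geq 1}\subseteq N_\lambda$ be a minimizing sequence for (\ref{eq6}). Identity (\ref{eq8}) gives $\varphi_\lambda(u_n)=\alpha(\frac{1}{p}-\frac{1}{q})||Du_n||_p^p\to 0$, hence $||Du_n||_p\to 0$, and Poincar\'e's inequality then yields $u_n\to 0$ in $L^p(\Omega)$. Proposition \ref{prop3} supplies boundedness of $\{u_n\}_{n\geq 1}$ in $W^{1,q}_{0}(\Omega)$, so the compact embedding $W^{1,q}_{0}(\Omega)\hookrightarrow L^q(\Omega)$ extracts a subsequence with $u_n\to u$ in $L^q(\Omega)$; since $p<q$ and $\Omega$ is bounded, $L^q\hookrightarrow L^p$ continuously, so the two limits must coincide and $u=0$, whence $||u_n||_q\to 0$.

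The key step is then to rescale in $L^q$ (not in $W^{1,q}_{0}$). Setting $v_n=u_n/||u_n||_q$, we have $||v_n||_q=1$; dividing the Nehari identity (\ref{eq7}) by $||u_n||_q^q$ produces
$$\alpha\,||u_n||_q^{p-q}\,||Dv_n||_p^p+\beta\,||Dv_n||_q^q=\lambda.$$
The $\beta$-term gives $||Dv_n||_q^q\leq\lambda/\beta$, so $\{v_n\}$ is bounded in $W^{1,q}_{0}(\Omega)$, and a further subsequence satisfies $v_n\to v$ in $L^q(\Omega)$ with $||v||_q=1$. The $\alpha$-term gives $||Dv_n||_p^p\leq(\lambda/\alpha)\,||u_n||_q^{q-p}\to 0$, because $q-p>0$ and $||u_n||_q\to 0$; hence $v_n\to 0$ in $W^{1,p}_{0}(\Omega)$ and therefore in $L^p(\Omega)$. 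But $v_n\to v$ in $L^q\hookrightarrow L^p$, so $v=0$, contradicting $||v||_q=1$.

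The delicate point I expect to be the main obstacle is choosing the normalization correctly: rescaling by $||u_n||_{1,q}$ would merely reproduce Proposition \ref{prop3} and lead nowhere, whereas rescaling by $||u_n||_q$ turns $||u_n||_q^{p-q}$ into a genuine blow-up factor that the $p$-part of the Nehari identity must absorb, forcing $||Dv_n||_p\to 0$ and igniting the $L^q$-versus-$L^p$ compactness clash. Once this normalization is in hand, the remaining ingredients (the representation (\ref{eq8}) of $\varphi_\lambda$ on $N_\lambda$, the $W^{1,q}_{0}$-boundedness from Proposition \ref{prop3}, and the Rellich--Kondrachov compactness) slot in mechanically.
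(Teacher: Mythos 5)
Your proof is correct and follows essentially the same route as the paper's: after deducing $\|Du_n\|_p\to 0$ from (\ref{eq8}) and $\|u_n\|_q\to 0$ via Proposition \ref{prop3}, you normalize $v_n=u_n/\|u_n\|_q$ and use the rescaled Nehari identity to force $\|Dv_n\|_p\to 0$ while keeping $\{v_n\}$ bounded in $W^{1,q}_0(\Omega)$, yielding the same $L^q$-versus-$L^p$ contradiction with $\|v_n\|_q=1$. The only (harmless) difference is that you bypass the paper's intermediate step showing $u_n\to 0$ strongly in $W^{1,q}_0(\Omega)$, which is indeed not needed.
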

\begin{proof}
	Arguing by contradiction, suppose that $m_\lambda=0$. Then we can find $\{u_n\}_{n\geq 1}\subseteq N_\lambda$ such that $\varphi_{\lambda_-}(u_n)\rightarrow 0^+$. From (\ref{eq8}) we have
	\begin{eqnarray}\label{eq14}
		&&\alpha\left[\frac{1}{p}-\frac{1}{q}\right]||Du_n||^p_p\rightarrow 0,\nonumber\\
		&\Rightarrow&u_n\rightarrow 0\ \mbox{in}\ W^{1,p}_{0}(\Omega)
	\end{eqnarray}
	
	Then from (\ref{eq14}) and Proposition \ref{prop3}, we infer that
	\begin{equation}\label{eq15}
		u_n\stackrel{w}{\rightarrow}0\ \mbox{in}\ W^{1,q}_{0}(\Omega)\ \mbox{and}\ u_n\rightarrow 0\ \mbox{in}\ L^q(\Omega)
	\end{equation}
	
	From (\ref{eq14}), (\ref{eq15}) and (\ref{eq7}), it follows that
	\begin{eqnarray}\label{eq16}
		&&||Du_n||_q\rightarrow 0,\nonumber\\
		&\Rightarrow&u_n\rightarrow 0\ \mbox{in}\ W^{1,q}_{0}(\Omega).
	\end{eqnarray}
	
	Let $v_n=\frac{u_n}{||u_n||_q}\ n\in\NN$. Then $||v_n||_q=1\ n\in\NN$. We have
	\begin{eqnarray}
		&&\lambda||v_n||^q_q-\beta||Dv||^q_q=\frac{\alpha}{||u_n||^{q-p}_q}||Dv_n||^p_p>0\ \mbox{for all}\ n\in\NN,\label{eq17}\\
		&\Rightarrow&||Dv_n||^q_q\leq\frac{\lambda}{\beta}\ \mbox{for all}\ n\in\NN,\nonumber\\
		&\Rightarrow&\{v_n\}_{n\geq 1}\subseteq W^{1,q}_{0}(\Omega)\ \mbox{is bounded}\label{eq18}
	\end{eqnarray}
	
	Then from (\ref{eq17}) and (\ref{eq18}) it follows that
	\begin{eqnarray}\label{eq19}
		&&\frac{\alpha}{||u_n||^{q-p}_q}||Dv_n||^p_p\leq c_8\ \mbox{for some}\ c_8>0,\ \mbox{all}\ n\in\NN,\nonumber\\
		&\Rightarrow&||Dv_n||_p\rightarrow 0\ (\mbox{see (\ref{eq16}) and recall that}\ p<q),\nonumber\\
		&\Rightarrow&v_n\rightarrow 0\ \mbox{in}\ W^{1,p}_{0}(\Omega)
	\end{eqnarray}
	
	From (\ref{eq18}) and (\ref{eq19}), we infer that
	$$v_n\rightarrow 0\ \mbox{in}\ L^q(\Omega),$$
	a contradiction since $||v_n||_q=1\ n\in\NN$. From this we conclude that $m_\lambda>0$.
\end{proof}

\begin{prop}\label{prop5}
	If $\lambda>\beta\hat{\lambda}_1(q)$, then there exists $\hat{u}_\lambda\in N_\lambda$ such that $m_\lambda=\varphi_\lambda(\hat{u}_\lambda).$
\end{prop}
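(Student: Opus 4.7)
The plan is to realize the infimum $m_\lambda$ by the direct method on the Nehari manifold, using the bounded minimizing sequences produced by Proposition \ref{prop3}, and then verify that the weak limit does in fact lie on $N_\lambda$ (the main obstacle, since $N_\lambda$ is not weakly closed).

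First I would pick a minimizing sequence $\{u_n\}_{n\ge 1}\subseteq N_\lambda$ with $\varphi_\lambda(u_n)\to m_\lambda$. By Proposition \ref{prop3} the sequence is bounded in $W^{1,q}_0(\Omega)$ (and hence also in $W^{1,p}_0(\Omega)$, since $p<q$). Passing to a subsequence we may assume $u_n\stackrel{w}{\to}\hat u_\lambda$ in $W^{1,q}_0(\Omega)$ and $u_n\to\hat u_\lambda$ in $L^q(\Omega)$ by the Rellich--Kondrachov compact embedding. The non-triviality $\hat u_\lambda\ne 0$ follows from Proposition \ref{prop4}: if $\hat u_\lambda=0$ then $\|u_n\|_q\to 0$, and the Nehari identity \eqref{eq7} forces $\|Du_n\|_p^p\to 0$, whence $\varphi_\lambda(u_n)\to 0$ by \eqref{eq8}, contradicting $m_\lambda>0$.

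The core of the argument is to show $\hat u_\lambda\in N_\lambda$. Weak lower semicontinuity of the $L^p$- and $L^q$-norms of the gradient combined with strong $L^q$-convergence applied to \eqref{eq7} gives
\begin{equation*}
\alpha\|D\hat u_\lambda\|_p^p+\beta\|D\hat u_\lambda\|_q^q\le \lambda\|\hat u_\lambda\|_q^q,\qquad\text{i.e.}\qquad \langle\varphi'_\lambda(\hat u_\lambda),\hat u_\lambda\rangle\le 0.
\end{equation*}
Assume for contradiction the inequality is strict. Then the fibering function
$$\xi_\lambda(t)=\langle\varphi'_\lambda(t\hat u_\lambda),t\hat u_\lambda\rangle=\alpha t^p\|D\hat u_\lambda\|_p^p+t^q\bigl[\beta\|D\hat u_\lambda\|_q^q-\lambda\|\hat u_\lambda\|_q^q\bigr]$$
is of the form $c_6 t^p-c_7 t^q$ with $c_6,c_7>0$ (exactly as in the proof of Proposition \ref{prop2}), so it has a unique zero $t_0\in(0,\infty)$, and since $\xi_\lambda(1)<0$ by assumption while $\xi_\lambda(t_0)=0$, one finds $t_0<1$. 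Thus $t_0\hat u_\lambda\in N_\lambda$, and using the representation \eqref{eq8},
\begin{equation*}
m_\lambda\le\varphi_\lambda(t_0\hat u_\lambda)=t_0^p\,\alpha\Bigl[\tfrac1p-\tfrac1q\Bigr]\|D\hat u_\lambda\|_p^p\le t_0^p\,\alpha\Bigl[\tfrac1p-\tfrac1q\Bigr]\liminf_n\|Du_n\|_p^p=t_0^p\,m_\lambda<m_\lambda,
\end{equation*}
where the middle inequality uses weak lower semicontinuity of $\|D\cdot\|_p^p$ and the last equality uses again \eqref{eq8} and $\varphi_\lambda(u_n)\to m_\lambda$. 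This contradiction forces $\langle\varphi'_\lambda(\hat u_\lambda),\hat u_\lambda\rangle=0$, so $\hat u_\lambda\in N_\lambda$.

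Finally, $\hat u_\lambda\in N_\lambda$ yields $\varphi_\lambda(\hat u_\lambda)\ge m_\lambda$, while the liminf argument combined with \eqref{eq8} yields $\varphi_\lambda(\hat u_\lambda)=\alpha[\tfrac1p-\tfrac1q]\|D\hat u_\lambda\|_p^p\le \liminf_n\varphi_\lambda(u_n)=m_\lambda$; hence $\varphi_\lambda(\hat u_\lambda)=m_\lambda$. The decisive step I anticipate as the main obstacle is exactly the fibering argument above: one must carefully exploit the strict sign of $c_7$ (ensured by strict Nehari inequality) to produce the rescaling $t_0<1$ and turn the weak inequality into the required equality, which is the standard way to circumvent the lack of weak closedness of $N_\lambda$.
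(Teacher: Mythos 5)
Your proposal is correct and follows essentially the same route as the paper: direct method with the bounded minimizing sequence from Proposition \ref{prop3}, nontriviality of the weak limit via Proposition \ref{prop4}, the weak lower semicontinuity inequality $\langle\varphi'_\lambda(\hat u_\lambda),\hat u_\lambda\rangle\le 0$, and the fibering/rescaling argument producing $t_0\in(0,1)$ with $t_0\hat u_\lambda\in N_\lambda$ to contradict minimality via the representation \eqref{eq8}. The only cosmetic difference is that you compute the root $t_0$ of the fibering map explicitly where the paper invokes Bolzano's theorem.
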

\begin{proof}
	Let $\{u_n\}_{n\geq 1}\subseteq N_\lambda$ such that $\varphi_\lambda(u_n)\rightarrow m_\lambda$. According to Proposition \ref{prop3}, $\{u_n\}_{n\geq 1}\subseteq W^{1,q}_{0}(\Omega)$ is bounded. So, we may assume that
	\begin{equation}\label{eq20}
		u_n\stackrel{w}{\rightarrow}\hat{u}_\lambda\ \mbox{in}\ W^{1,q}_{0}(\Omega)\ \mbox{and}\ u_n\rightarrow\hat{u}_\lambda\ \mbox{in}\ L^q(\Omega)
	\end{equation}
	
	Since $u_n\in N_\lambda\ n\in\NN$, we have
	\begin{equation}\label{eq21}
		\alpha||Du_n||^p_p+\beta||Du_n||^q_q=\lambda||u_n||^q_q\ \mbox{for all}\ n\in\NN.
	\end{equation}
	
	Passing to the limit as $n\rightarrow\infty$ and using (\ref{eq20}) and the weak lower semicontinuity of the functional in a Banach space, we obtain
	\begin{equation}\label{eq22}
		\alpha||D\hat{u}_\lambda||^p_p\leq\lambda||\hat{u}_\lambda||^q_q-\beta||D\hat{u}_\lambda||^q_q.
	\end{equation}
	
	Note that $\lambda||\hat{u}_\lambda||^q_q-\beta||D\hat{u}_\lambda||^q_q\neq 0$ or otherwise from (\ref{eq18}), we have
	\begin{eqnarray*}
		&&||Du_n||_p\rightarrow 0,\\
		&\Rightarrow&u_n\rightarrow 0\ \mbox{in}\ W^{1,p}_{0}(\Omega)
	\end{eqnarray*}
	
	Recall that
	$$\varphi_\lambda(u_n)=\alpha\left[\frac{1}{p}-\frac{1}{q}\right]||Du_n||^p_p\ \mbox{for all}\ n\in\NN\ (\mbox{see (\ref{eq8})}).$$
	
	So, it follows that
	\begin{eqnarray*}
		&&\varphi_\lambda(u_n)\rightarrow 0\ \mbox{as}\ n\rightarrow\infty,\\
		&\Rightarrow&m_\lambda=0,
	\end{eqnarray*}
	which contradicts Proposition \ref{prop4}. Therefore
	\begin{eqnarray*}
		&&\lambda||\hat{u}_\lambda||^q_q-\beta||D\hat{u}_\lambda||^q_q\neq 0,\\
		&\Rightarrow&\hat{u}_\lambda\neq 0.
	\end{eqnarray*}
	
	Also, exploiting the sequential weak lower semicontinuity of $\varphi_\lambda(\cdot)$, we have
	$$\varphi_\lambda(\hat{u}_\lambda)\leq\lim\limits_{n\rightarrow\infty}\varphi_\lambda(u_n)=m_\lambda\ (\mbox{see (\ref{eq20})}).$$
	
	If we show that $\hat{u}_\lambda\in N_\lambda$, then $\varphi_\lambda(\hat{u}_\lambda)=m_\lambda$ and this will conclude the proof.
	
	To this end, let
	$$\hat{\xi}_\lambda(t)=\left\langle \varphi'_\lambda(t\hat{u}_\lambda,t\hat{u}_\lambda)\right\rangle\ \mbox{for all}\ t\in[0,1].$$
	
	Evidently $\hat{\xi}_\lambda(\cdot)$ is a continuous function. Arguing indirectly, suppose that $\hat{u}_\lambda\notin N_\lambda$. Then since $u_n\in N_\lambda$ for all $n\in\NN$, from (\ref{eq20}) we infer that
	\begin{equation}\label{eq23}
		\hat{\xi}_\lambda(1)<0.
	\end{equation}
	
	On the other hand, note that since $\lambda>\beta\hat{\lambda}_1(q)$, we have
	\begin{eqnarray}\label{eq24}
		&&\hat{\xi}_\lambda(t)\geq c_9t^p-c_{10}t^q\ \mbox{for some}\ c_9,c_{10}>0,\nonumber\\
		&\Rightarrow&\hat{\xi}_\lambda(t)>0\ \mbox{for all}\ t\in(0,\epsilon)\ \mbox{with}\ \epsilon\in(0,1)\ \mbox{small}\ (\mbox{recall}\ p<q).
	\end{eqnarray}
	
	From (\ref{eq23}), (\ref{eq24}) and Bolzano's theorem, we see that there exists $t^*\in(0,1)$ such that
	\begin{eqnarray*}
		&&\hat{\xi}_\lambda(t^*\hat{u}_\lambda)=0,\\
		&\Rightarrow&t^*\hat{u}_\lambda\in N_\lambda.
	\end{eqnarray*}
	
	Then using (\ref{eq8}) we have
	\begin{eqnarray*}
		m_\lambda\leq\varphi_\lambda(t^*\hat{u}_\lambda)&=&\alpha\left[\frac{1}{p}-\frac{1}{q}\right](t^*)^p||D\hat{u}_\lambda||^p_p\\
		&<&\alpha\left[\frac{1}{p}-\frac{1}{q}\right]||D\hat{u}_\lambda||^p_p\ (\mbox{since}\ t^*\in(0,1))\\
		&\leq&\alpha\left[\frac{1}{p}-\frac{1}{q}\right]\liminf\limits_{n\rightarrow\infty}||Du_n||^p_p\ (\mbox{see (\ref{eq20})})\\
		&=&m_\lambda,
	\end{eqnarray*}
	a contradiction. Therefore $\hat{u}_\lambda\in N_\lambda$ and this finishes the proof.
\end{proof}

So, we can state the following theorem concerning problem \eqref{eqP}.
\begin{theorem}\label{th6}
	If $\lambda>\beta\hat{\lambda}_1(q)$ then $\lambda$ is an eigenvalue of problem \eqref{eqP} with eigenfunction $\hat{\lambda}\in C^1_0(\overline{\Omega})$.
\end{theorem}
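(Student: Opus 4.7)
The plan is to handle the theorem in two cases according to the relative size of $p$ and $q$. When $1<q<p$, Proposition \ref{prop1} already delivers the full statement, so the work concentrates on the remaining case $1<p<q$. In that case, the groundwork laid in Propositions \ref{prop2}--\ref{prop5} furnishes a minimizer $\hat{u}_\lambda\in N_\lambda$ with $\varphi_\lambda(\hat{u}_\lambda)=m_\lambda>0$. The central task is then to promote this constrained minimizer to a free critical point of $\varphi_\lambda$; standard nonlinear elliptic regularity will produce the $C^1$ regularity.

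To execute the Lagrange multiplier step, I would introduce the $C^1$ functional $G:W^{1,q}_0(\Omega)\to\RR$ defined by
$$G(u)=\left\langle \varphi'_\lambda(u),u\right\rangle=\alpha\|Du\|_p^p+\beta\|Du\|_q^q-\lambda\|u\|_q^q,$$
so that $N_\lambda=G^{-1}(0)\setminus\{0\}$. Since $\hat{u}_\lambda$ is a local minimum of $\varphi_\lambda$ on the smooth constraint $\{G=0\}$, the Lagrange multiplier rule yields some $\mu\in\RR$ with $\varphi'_\lambda(\hat{u}_\lambda)=\mu G'(\hat{u}_\lambda)$. Testing this identity against $\hat{u}_\lambda$ and using that $\hat{u}_\lambda\in N_\lambda$, the left-hand side vanishes, leaving $\mu\left\langle G'(\hat{u}_\lambda),\hat{u}_\lambda\right\rangle=0$. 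A direct computation, combined with the Nehari identity (\ref{eq7}), collapses $\left\langle G'(\hat{u}_\lambda),\hat{u}_\lambda\right\rangle$ to $(p-q)\alpha\|D\hat{u}_\lambda\|_p^p$.

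The crux of the argument is then the observation that this quantity is nonzero: we have $p<q$, and $\hat{u}_\lambda\neq 0$ forces $\|D\hat{u}_\lambda\|_p>0$ via the Poincar\'e inequality. Hence $\mu=0$ and $\varphi'_\lambda(\hat{u}_\lambda)=0$, which says that $\hat{u}_\lambda$ is a weak solution of \eqref{eqP} at the parameter $\lambda$; in particular $\lambda\in\hat{\sigma}(\alpha,\beta)$. Finally, the nonlinear regularity theory of Lieberman \cite{9}, already invoked in the proof of Proposition \ref{prop1}, upgrades $\hat{u}_\lambda$ to $C^1_0(\overline{\Omega})$. I do not expect a serious obstacle here; the most delicate point is the verification that $\left\langle G'(\hat{u}_\lambda),\hat{u}_\lambda\right\rangle\neq 0$, which is what prevents the Lagrange multiplier from being spurious and is precisely the place where the ordering $p<q$ is exploited.
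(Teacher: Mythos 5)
Your proposal is correct, and it reaches the conclusion by a genuinely different execution of the key step. The paper also splits into the cases $1<q<p$ (handled by Proposition \ref{prop1}) and $1<p<q$ (handled via the Nehari minimizer $\hat{u}_\lambda$ from Proposition \ref{prop5}), but it promotes the constrained minimizer to a free critical point by an explicit fibering argument: for each test direction $h$ it builds the curve $s\mapsto t(s)(\hat{u}_\lambda+sh)$ lying in $N_\lambda$, with $t(s)$ given by an explicit formula, and differentiates $s\mapsto\varphi_\lambda(t(s)(\hat{u}_\lambda+sh))$ at $s=0$; the term $t'(0)\left\langle\varphi'_\lambda(\hat{u}_\lambda),\hat{u}_\lambda\right\rangle$ drops out because $\hat{u}_\lambda\in N_\lambda$, leaving $\left\langle\varphi'_\lambda(\hat{u}_\lambda),h\right\rangle=0$. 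You instead invoke the Lagrange multiplier rule for the constraint $G(u)=\left\langle\varphi'_\lambda(u),u\right\rangle=0$ and kill the multiplier by the computation $\left\langle G'(u),u\right\rangle=(p-q)\alpha\|Du\|_p^p\neq 0$ on $N_\lambda$. The two arguments are the two standard ways of saying that $N_\lambda$ is a natural constraint (cf.\ the Remark after the theorem). Your route has the advantage that the identity $\left\langle G'(\hat{u}_\lambda),\hat{u}_\lambda\right\rangle\neq 0$ simultaneously certifies that $0$ is a regular value of $G$ near $\hat{u}_\lambda$ (so the multiplier rule applies, a point you should state explicitly \emph{before} invoking it rather than after) and that the multiplier vanishes; it also avoids the paper's unproved assertion that $s\mapsto t(s)$ is differentiable. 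The paper's route is more self-contained in that it needs no abstract Lagrange multiplier theorem in Banach spaces. Everything else (nontriviality of $\hat{u}_\lambda$ from Proposition \ref{prop5}, Lieberman regularity to get $\hat{u}_\lambda\in C^1_0(\overline{\Omega})$) matches.
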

\begin{proof}
	For $1<q<p$, this follows from Proposition \ref{prop1}.
	
	For $1<q<p$, let $h\in W^{1,q}_{0}(\Omega)$. Choose $\epsilon>0$ such that $\hat{u}_\lambda+sh\not\equiv 0$ for $s\in(-\epsilon,\epsilon)$. We set
	$$t(s)=\left[\frac{\lambda||\hat{u}_\lambda+sh||^q_q-\beta||D(\hat{u}_\lambda+sh)||^q_q}{\alpha||D(\hat{u}_\lambda+sh)||^p_p}\right]^{\frac{1}{p-q}},\ s\in(-\epsilon,\epsilon).$$
	
	Then we have that $s\rightarrow t(s)$ is a curve in $N_\lambda$ and it is differentiable. Let $\hat{\xi}_\lambda:(-\epsilon,\epsilon)\rightarrow\RR$ be defined by
	$$\hat{\xi}_\lambda(s)=\varphi_\lambda(t(s)(\hat{u}_\lambda+sh)),\ s\in(-\epsilon,\epsilon).$$
	
	Evidently $s=0$ is a minimizer of $\hat{\xi}_\lambda(\cdot)$ and so
	\begin{eqnarray*}
		0&=&\hat{\xi}_\lambda(0)\\
		&=&\left\langle \varphi'_\lambda(\hat{u}_\lambda),t'(0)\hat{u}_\lambda+h\right\rangle\ (\mbox{by the chain rule})\\
		&=&t'(0)\left\langle \varphi'_\lambda(\hat{u}_\lambda),\hat{u}_\lambda\right\rangle+\left\langle \varphi'_\lambda(\hat{u}_\lambda),h\right\rangle\\
		&=&\left\langle \varphi'_\lambda(\hat{u}_\lambda),h \right\rangle\ (\mbox{since}\ \hat{u}_\lambda\in N_\lambda),\\
		\Rightarrow&&\alpha\left\langle A_p(\hat{u}_\lambda),h\right\rangle+\beta\left\langle A_q(\hat{u}_\lambda),h\right\rangle=\lambda\int_{\Omega}|\hat{u}_\lambda|^{q-2}\hat{u}_\lambda hzd,\\
		\Rightarrow&&-\alpha\Delta_p\hat{u}_\lambda(z)-\beta\Delta_q\hat{u}_\lambda(z)=\lambda|\hat{u}_\lambda(z)|^{q-2}\hat{u}_\lambda(z)\ \mbox{for almost all}\ z\in\Omega,\ \hat{u}_\lambda|_{\partial\Omega}=0,\\
		\Rightarrow&&\hat{u}_\lambda\neq 0\ \mbox{is an eigenfunction with eigenvalue}\ \lambda>\beta\hat{\lambda}_1(q).
	\end{eqnarray*}
	
	Then nonlinear regularity theory of Lieberman \cite[p. 320]{9}, implies that $\hat{u}_\lambda\in C^1_0(\overline{\Omega})$.
\end{proof}

\begin{remark}
	In the terminology of critical point theory, the above proof shows that the Nehari manifold, is a natural constant for the functional $\varphi_\lambda$ (see Gasinski \& Papageorgiou \cite[p. 812]{6}).
\end{remark}

Now suppose that $\alpha=(1-\beta),\beta\in(0,1)$. Let $L_\beta=-(1-\beta)\Delta_p-\beta\Delta_q$ and let $\hat{\sigma}(\beta)$ be the spectrum of $L_\beta$. From Theorem \ref{th6}, we know that
$$\hat{\sigma}(\beta)=(\beta\hat{\lambda}_1(q),+\infty).$$

Evidently $\hat{\sigma}(\cdot)$ is Hausdorff and Vietoris continuous on $(0,1)$ (see Hu \& Papageorgiou \cite{7}), but at $\beta=1$, it exhibits a discontinuity since
$$\hat{\sigma}(1)=\mbox{the spectrum of}\ (-\Delta_q,W^{1,q}_{0}(\Omega))$$
and from Section 2, we know that $\hat{\lambda}_1(q)>0$ is isolated and so $\hat{\sigma}(1)\neq(\hat{\lambda}_1(q),+\infty)$. This is more emphatically illustrated when $q=2$. Then
$$\hat{\sigma}(\beta)=(\beta\hat{\lambda}_1(2),+\infty)\ \mbox{for all}\ \beta\in(0,1)$$
but at $\beta=1$, we have
$$\hat{\sigma}(1)=\{\hat{\lambda}_k(2)\}_{k\geq 1}\ (\mbox{discrete spectrum}).$$	
	
\medskip
{\bf Acknowledgments.} This research was supported by the Slovenian Research Agency grants
P1-0292, J1-8131, J1-7025, N1-0064, and N1-0083. V.D.~R\u adulescu acknowledges the support through a grant of the Romanian Ministry of Research and Innovation, CNCS-UEFISCDI, project number PN-III-P4-ID-PCE-2016-0130,
within PNCDI III.


\begin{thebibliography}{99}

\bibitem{1} V. Benci, P. D'Avenia, D. Fortunato, L. Pisani,  Solitons in several space dimensions: Derrick's problem and infinitely many solutions, {\it Arch. Rat. Mech. Anal.} {\bf 154} (2000), no. 4, 297-324.

\bibitem{2} N. Chorfi, V.D. R\u adulescu, Continuous spectrum for some classes of $(p,2)$-equations with linear or sublinear growth, {\it Miskolc Math. Notes} {\bf 17} (2016), no. 2, 817-826.

\bibitem{4} L. Gasinski, N.S. Papageorgiou, Multiplicity of positive solutions for eigenvalue problems of $(p,2)$-equations, {\it Bound. Value Probl.} {\bf 2012}, 2012:152, 17 pp.

\bibitem{5} L. Gasinski, N.S. Papageorgiou, Asymmetric $(p,2)$-equations with double resonance, {\it  Calc. Var. Partial Differential Equations} {\bf 56} (2017), no. 3, Art. 88, 23 pp.

\bibitem{6} L. Gasinski, N.S. Papageorgiou, {\it Nonlinear Analysis}, Chapman \& Hall/CRC, Boca Raton, FL, 2006.

\bibitem{7} L. Gasinski, N.S. Papageorgiou, {\it Exercises in Analysis. Part 2: Nonlinear Analysis}, Springer, Cham, 2016.

\bibitem{8} S. Hu, N.S. Papageorgiou, {\it Handbook of Multivalued Analysis. Volume I: Theory}, Kluwer Academic Publisher, Dordrecht, The Netherlands, 1997.

\bibitem{9} G. Lieberman,  The natural generalization of the natural conditions of Ladyzhenskaya and Uraltseva for elliptic equations, {\it Comm. Partial Diff. Equations} {\bf 16} (1991), no. 2-3, 311-361.

\bibitem{10} S. Marano, S. Mosconi, N.S. Papageorgiou,  Multiple solutions to $(p,q)$-Laplacian problems with resonant concave nonlinearity, {\it Adv. Nonlin. Studies} {\bf 16} (2016), 51-65.

\bibitem{11} S. Marano, S. Mosconi, N.S. Papageorgiou, On a $(p,q)$-Laplacian problem with parametric concave term and asymmetric perturbation, {\it Rend. Lince. Mat. Appl.} {\bf 29} (2018), 109-125.

\bibitem{12} M. Mihailescu, V.D. R\u adulescu, On a nonhomogeneous quasilinear eigenvalue problem in Sobolev spaces with variable exponents, {\it Proc. Amer. Math. Soc.} {\bf 135} (2007), 2929-2937.

\bibitem{14} N.S. Papageorgiou, V.D. R\u adulescu, Qualitative phenomena for some classes of quasilinear elliptic equations with multiple resonance, {\it Appl. Math. Optim.} {\bf 69} (2014), 393-430.

\bibitem{15} N.S. Papageorgiou, V.D. R\u adulescu, Noncoercive resonant $(p,2)$-equations, {\it Appl. Math. Optim.} {\bf 76} (2017), 621-639.

\bibitem{16} N.S. Papageorgiou, V.D. R\u adulescu, D.D. Repov\v{s}, Existence and multiplicity of solutions for resonant $(p,2)$-equations, {\it Adv. Nonlin. Studies} {\bf 17} (2017), 105-129.

\bibitem{16bis} N.S. Papageorgiou, V.D. R\u adulescu, D.D. Repov\v{s}, Double-phase problems with reaction of arbitrary growth, {\it Z. Angew. Math. Phys.} {\bf 69} (2018), no. 4, 69:108.

\bibitem{17} N.S. Papageorgiou, V.D. R\u adulescu, D.D. Repov\v{s}, Perturbations of nonlinear eigenvalue problems, {\it Comm. Pure Appl. Anal.}, {\bf 18}:3 (2019), 1403-1431.

\bibitem{18} V.D. R\u adulescu, D.D. Repov\v{s}, {\it Partial Differential Equations with Variable Exponents. Variational Methods and Qualitative Analysis}, CRC Press, Boca Raton, FL, 2015.

\bibitem{19} H. Yin, Z. Yang, A class of $(p,q)$-Laplacian type equation with concave-convex nonlinearities in bounded domains, {\it J. Math. Anal. Appl.} {\bf 382} (2011), 843-855.

\bibitem{20} V.V. Zhikov, Averaging functionals of the calculus of variations and elasticity theory, {\it Math. USSR-Izv.} {\bf 29} (1987), 33-66.

\end{thebibliography}
\end{document}